\newtheorem{thm}{Theorem}[section]
\newtheorem{obs}[thm]{Observation}
\newtheorem{lem}[thm]{Lemma}
\newtheorem{alg}{Algorithm}
\theoremstyle{remark}
\newenvironment{poc}[1][]{\begin{proof}[\ifthenelse{\equal{#1}{}}{Proof of correctness}{Proof of correctness of #1}]}{\end{proof}}
\newcommand{\q}{q}
\newcommand{\gb}{\mathfrak{b}}
\newcommand{\gd}{\mathfrak{d}}
\newcommand{\gp}{\mathfrak{p}}
\newcommand{\gq}{\mathfrak{q}}
\newcommand{\gr}{\mathfrak{r}}
\newcommand{\CC}{\mathbb{C}}
\newcommand{\FF}{\mathbb{F}}
\newcommand{\QQ}{\mathbb{Q}}
\newcommand{\RR}{\mathbb{R}}
\newcommand{\UU}{\mathbb{U}}
\newcommand{\ZZ}{\mathbb{Z}}
\newcommand{\GB}{\mathfrak{B}}
\newcommand{\GP}{\mathfrak{P}}
\newcommand{\SB}{\mathscr{B}}
\newcommand{\classS}{C_{\setS}}
\newcommand{\ClassS}{\sfrac{\classS}{\classS^2}}
\newcommand{\K}{K}
\newcommand{\Kp}{\K_\gp}
\newcommand{\setS}{S}
\newcommand{\US}{U_\setS}
\newcommand{\UUS}{\UU_\setS}
\newcommand{\Sprime}{\setS'}
\newcommand{\into}{\rightarrowtail}
\newcommand{\units}[1]{#1^\times}
\newcommand{\squares}[1]{#1^{\times2}}
\newcommand{\sing}[1]{E_{#1}}
\newcommand{\singS}{\sing{\setS}}
\newcommand{\Sing}[1]{\mathbb{E}_{#1}}
\newcommand{\SingS}{\Sing{\setS}}
\newcommand{\st}{\mathrel{\mid}}
\DeclareMathOperator{\rank}{rk}
\newcommand{\rk}[1][]{\ifthenelse{\equal{#1}{}}{\rank_{2}}{\rank_{#1}}}
\newcommand{\class}[2][]{\ifthenelse{\equal{#1}{}}{[#2]}{[#2]_{#1}}}   
\newcommand{\un}[1][]{\ifthenelse{\equal{#1}{}}{\units{\K}}{\units{#1}}}
\newcommand{\sqgd}[1][]{\ifthenelse{\equal{#1}{}}{\sfrac{\un}{\squares{\K}}}{\sfrac{\units{#1}}{\squares{#1}}}}
\newcommand{\term}[1]{\emph{#1}}
\newcommand{\even}{\equiv 0\pmod{2}}
\newcommand{\odd}{\equiv 1\pmod{2}}
\newcommand{\form}[1]{\langle#1\rangle}
\DeclareMathOperator{\adim}{dim_a}
\DeclareMathOperator{\disc}{disc}
\DeclareMathOperator{\ord}{ord}
\DeclareMathOperator{\sgn}{sgn}
\def\clap#1{\hbox to 0pt{\hss#1\hss}}
\def\mathrlap{\mathpalette\mathrlapinternal}
\def\mathrlapinternal#1#2{\rlap{$\mathsurround=0pt#1{#2}$}}
\author[P. Koprowski \and B. Rothkegel]{Przemys{\l}aw Koprowski \and Beata Rothkegel}
\email{przemyslaw.koprowski@us.edu.pl}
\email{beata.rothkegel@us.edu.pl}
\title{The anisotropic part of a quadratic form over a number field}
\begin{document}
\maketitle
\begin{abstract}
It is well known that every non-degenerate quadratic form admits a decomposition into an orthogonal sum of its anisotropic part and a hyperbolic form. This decomposition is unique up to isometry. In this paper we present an algorithm for constructing an anisotropic part of a given form with coefficients in an arbitrary number field.
\end{abstract}

\section{Introduction}
The notion of isotropy is central to the theory of quadratic forms. Recall that a form~$\q$ is called \term{isotropic}, if there is a non-zero vector~$v$ such that $\q(v) = 0$. In geometric terms, this means that $v$ is self-orthogonal with respect to~$\q$. The celebrated Witt decomposition theorem (see e.g., \cite[Chapter~12]{Szymiczek1997}) says that every non-degenerate quadratic form~$\q$ is isometric to an orthogonal sum
\[
\q_a\perp w\times \form{1,-1},
\]
for some anisotropic form~$\q_a$, called an \term{anisotropic part} of~$\q$, and some $w\geq 0$, called the \term{Witt index} of~$\q$. The anisotropic part of~$\q$ is determined uniquely up to isometry. Its dimension is called the \term{anisotropic dimension} of~$\q$ and denoted $\adim(\q)$, hereafter.

From the computational point of view, given a non-degenerate quadratic form~$\q$, the following four problems arise immediately: 
\begin{enumerate}
\item\label{it:is_iso} determine whether $\q$ is isotropic or not;
\item\label{it:dim_a} compute the anisotropic dimension of~$\q$;
\item\label{it:q_a} construct its anisotropic part; 
\item\label{it:v_iso} if $\q$ is isotropic, find an isotropic vector.
\end{enumerate}
The problems above are listed in increasing order of difficulty. Indeed, if one can solve~\eqref{it:dim_a}, then it suffices to compare $\dim\q$ with $\adim\q$ to solve~\eqref{it:is_iso}. If one can construct an anisotropic part~$\q_a$ of~$\q$, then $\adim\q = \dim\q_a$. Finally, if one can solve~\eqref{it:v_iso}, then one may find on anisotropic part of~$\q$ removing successive hyperbolic planes till only an anisotropic form is left.

It is not at all surprising that most effort have been focused on forms over the rational. For such forms solutions to problem~\eqref{it:v_iso} have a long history dating back to Lagrange. Efficient algorithms for this task were devised by Cremona and Rusin in \cite{CreRus2003}, Simon in \cite{Simon2005} and Castel in \cite{Castel2013}. More recently, Quertier in \cite{Quertier2016} presented a method for finding a vector such is simultaneously isotropic with respect to two forms (of dimension at least~$13$) with rational coefficients. For forms over $\RR(x)$ task~\eqref{it:is_iso} was solved by the first author in \cite{Koprowski2008}, while the task~\eqref{it:q_a}, and consequently also \eqref{it:v_iso}, was proved to be unsolvable in general. Nonetheless, for  forms of dimension~$3$ there is a solution even to problem~\eqref{it:v_iso}, which is due to Schicho (see \cite{Schicho1998}). To some extend it resembles Lagrange approach for forms over~$\QQ$. Schicho's method was subsequently generalized by van~Hoeij and Cremona \cite{vHC2006} to forms with coefficients in multivariate rational function fields over either a finite field or the rationals. For forms with coefficients in real algebraic fields, tasks~\eqref{it:is_iso} and~\eqref{it:dim_a}  are solved in \cite{JK2016}. Algorithmic solutions to~\eqref{it:is_iso} and~\eqref{it:dim_a} for forms over number fields are given in \cite{KCz2018}. Analogous algorithms for global function fields have been recently invented by Darkey-Mensah (see \cite{DarkeyMensah2021}). Also recently, a solution to task~\eqref{it:q_a} has been found in \cite{DMKR2021} by the present authors and Darkey-Mensah. The goal of this paper is to present an algorithm that solves problem~\eqref{it:q_a} over an arbitrary number field. For forms of anisotropic dimension~$2$, the proposed algorithm is a generalization of the algorithm in \cite{DMKR2021}. For forms of anisotropic dimension~$3$ or more (except anisotropic dimension~$4$ over non-real fields) the algorithms presented in this paper are completely new. All the described algorithms were implemented in Magma package CQF \cite{Koprowski2020}. 

This paper is organized as follows. In Section~\ref{sec:notation} we establish the notation used throughout the paper and recall most of the relevant terminology. The two subsequent sections describe some auxiliary algorithms used in the main part. Section~\ref{sec:singular} presents an algorithm for constructing the group of $S$-singular elements (modulo squares) of a number field. Next, in Section~\ref{sec:signs} we discuss methods for finding elements that have prescribed signs with respect to different orderings of the field. The main result of this paper is the algorithm for constructing an anisotropic part of a given form. It is described in Section~\ref{sec:anisotropic_part}, which is divided into three subsections, dealing with different anisotropic dimensions. Finally, in Section~\ref{sec:example} we show an explicit example of how the algorithm works in practice.

Our algorithm for constructing an anisotropic part of a quadratic form depends on a number of auxiliary procedures. We assume that, beside the basic linear algebra routines, we have at our disposal the following tools from the arsenal of the computational algebraic number theory:
\begin{itemize}
\item An algorithm that checks if an ideal is principal, and if so finds its generator (see e.g., \cite[Section~6.5.5]{Cohen1993}).
\item Factorization of an ideal into prime ideals. There is a vast bibliography concerning this problem, see e.g., \cite[Algorithm 2.3.22]{Cohen2000} or \cite[\S2.2]{GMN2013}. In fact we will need only to factorize principal ideals.
\item A method for isolating real roots of a polynomial. There are probably hundreds of known techniques that can be used here. See for example \cite{AS05,AV10,BPR03}.
\item Closely connected to the previous point, are algorithms for enumerating all the real embeddings of a number field, see e.g., \cite[\S3.1]{Belabas2004}.
\item A method for computing the $\setS$-class group for some finite set~$\setS$, see for example \cite[Algorithm~7.4.6]{Cohen2000} and \cite{CDD1997, Biasse2014, Biasse2014a}
\item A related problem is the construction of the group of $\setS$-units, see e.g. \cite[Algorithm~7.4.8]{Cohen2000} or \cite{CDD1997}. 
\item Computation of the anisotropic dimension (or equivalently of the Witt index) of a quadratic form. This is described in \cite[Algorithm~9]{KCz2018}.
\item An algorithm for computing Hilbert symbol is given in \cite[Algorithm~6.6]{Voight13}.
\end{itemize}
All these algorithm are implemented in existing computer algebra systems, like for instance Magma \cite{BCP1997} (see also \cite{GMN2010}).

\section{Notation}\label{sec:notation}
Throughout this paper we use the following notation conventions. $\K$~always denotes a number field, that is a finite extension of~$\QQ$. The set of all places (classes of valuations) of~$\K$ is denoted~$\Omega_K$. We use fraktur letters $\gp, \gq, \gr,\dotsc$ for non-archimedean places of~$\K$. If $\gp$ is such a place, then $\ord_\gp: \un \to \ZZ$ is the associated discrete valuation, $\K(\gp)$ is the residue field and $\Kp$ the completion of~$\K$ at~$\gp$. Recall (see e.g. \cite[Theorem~VI.2.2]{Lam2005}) that if $\gp$ is nondya­dic (i.e. it does not divide~$2$), then the square class group $\sqgd[\Kp]$ consists of four cosets represented by $1$, $u_\gp$, $\pi_\gp$ and $u_\gp\pi_\gp$, where $\pi_\gp$ is a $\gp$-uniformizer and $u_\gp$ satisfies the conditions:
\[
\ord_\gp u_\gp = 0
\qquad\text{and}\qquad u_\gp\notin \squares{\Kp}.
\]
For any two elements $a, b\in \un$, we write $(a,b)_\gp$ for the Hilbert symbol of~$a$ and~$b$ at~$\gp$ (see e. g., \cite[Chapter VI]{Lam2005}). If $\q = \form{a_1, \dotsc, a_n}$ is a quadratic form, then 
\[
s_\gp(\q) := \prod_{i<j} (a_i, a_j)_\gp
\]
is the Hasse invariant of~$\q$ at~$\gp$ (see e.g., \cite[Definition~V.3.17]{Lam2005}). 

Further, $\disc(\q)$ is the discriminant of~$\q$, that is (see eg., \cite[Definition~15.2.1]{Szymiczek1997}):
\[
\disc(\q) = (-1)^{\frac12n(n-1)}\cdot \prod_{i=1}^n a_i.
\]
Moreover, we denote
\[
\GP(\q) := \bigl\{
\gp\in \Omega_\K\st \ord_\gp(a_i)\text{ is odd for some }i\leq n\bigr\}
\]
the set of primes of~$\K$ where at least one of the coefficients has an odd valuation.

The set of similarity classes of non-degenerate forms, equipped with binary operations induced by orthogonal sum and tensor product, is called the Witt ring of~$\K$ and denoted $W\K$ (see e.g., \cite{Lam2005, Szymiczek1997} for further information). The ideal class group of~$\K$ is denoted~$C_\K$. If $\setS$ is any finite set of places of~$\K$, then $\classS$ is the associate $\setS$-class group.

\section{Group of singular elements}\label{sec:singular}
In this section we gather some results concerning the group of $\setS$-singular elements, modulo squares. Let $\setS$ be a finite set of places of~$\K$, containing all archimedean places. We say that an element $\alpha\in \un$ is \term{$\setS$-singular} if it has even valuation at every prime $\gp\notin \setS$. The set of all $\setS$-singular elements is denoted~$\singS$. Observe that $1\in \singS$ and for every $\alpha\in \singS$, we have $\alpha\cdot \squares{\K} \subset \singS$. Thus, the notion of $\setS$-singularity extends canonically to square classes of~$\K$. We denote $\SingS := \sfrac{\singS}{\squares{\K}}$. Therefore we have
\[
\SingS = \bigl\{ \alpha\in \sqgd\st \ord_\gp\alpha\text{ is even for all }\gp \notin \setS\bigr\}.
\]
Assume that $\setS$ contains all dyadic primes of~$\K$. Recall that an element $\alpha\in \un$ is called an \term{$\setS$-unit} if $\ord_\gp\alpha = 0$ for every $\gp\notin \setS$. The set of $\setS$-units is denoted~$\US$. It is clear that $\US\subset \singS$. The canonical embedding $\sfrac{\US}{\US^2}\into \sqgd$ lets us identify $\sfrac{\US}{\US^2}$ with a subset of~$\SingS$. We shall denote this subset by~$\UUS$.

The construction of the group~$\SingS$ of singular elements modulo squares is closely related to the computation of~$\UUS$. One method, sketched in Magma manual \cite{CBFS2015}, is to enlarge~$\setS$ to a new set $\Sprime$, such that the $\Sprime$-class number is odd. (It suffices to adjoin to~$\setS$ a set of primes whose classes form a basis of $\ClassS$.) Then, it is known that $\UU_{\Sprime} = \Sing{\Sprime}$, and so one can obtain~$\SingS$ as a $\FF_2$-subspace of~$\Sing{\Sprime}$.

Below we present an alternative approach, which is due to Alfred Czogała.

\begin{alg}\label{alg:S-singular}
Let $\setS$ be a finite set of places of a number field~$\K$, that contains all the infinite and dyadic places. This algorithm constructs a basis \textup(over $\FF_2$\textup) of the group~$\SingS$.
\begin{enumerate}
\item Let ${}_2\classS$ be the subgroup of the $\setS$-class group~$\classS$, consisting of elements of order $\leq 2$.
\item Find a set~$\GB$ of primes that form a basis of~${}_2\classS$.
\item For every $\gb\in \GB$ find an element $\lambda_\gb$ that generates the \textup(principal\textup) ideal~$\gb^2$.
\item Find a basis~$\SB$ of $\UUS$.
\item Output $\SB\cup \{\lambda_\gb\st \gb\in \GB\}$.
\end{enumerate}
\end{alg}

\begin{poc}
Consider a map $\psi : \singS\to {}_2\classS$ given by the formula:
\[
\psi(\alpha) := \Biggl[\;\prod_{\gp\notin \setS} \gp^{\sfrac{(\ord_\gp\alpha)}{2}}\Biggr]_{\mathrlap{\setS}}\;.
\]
It is clear that $\psi$ is a group epimorphism. Observe that the kernel of~$\psi$ coincides with $U_\setS\cdot \squares{\K}$. Indeed, suppose that $\psi(\alpha)$ vanishes for some $\alpha\in \un$. This means that the ideal
\[
\prod_{\gp\notin \setS} \gp^{\sfrac{(\ord_\gp\alpha)}{2}}
\]
is principal. Hence, there is $\beta\in \un$ such that $2\ord_\gp\beta = \ord_\gp\alpha$ for every $\gp\notin \setS$. Then $\sfrac{\alpha}{\beta^2}$ is an $\setS$-unit. This proves the inclusion $\ker\psi \subset U_\setS\cdot \squares{\K}$. To show the other inclusion, observe first that $U_\setS$ is trivially contained in $\ker\psi$ and if $\alpha\in \squares{\K}$, say $\alpha = \beta^2$ for some $\beta\in \un$, then $\psi(\alpha) = \class[S]{(\beta)} = 1$. This way, we have proved the claim.

Now, let $\GB = \{\gb_1, \dotsc, \gb_m\}$ be a basis of ${}_2\classS$. Then $\class[\setS]{\gb_i}^2$ is principal for every $i\leq m$. Hence, the corresponding generator $\lambda_{\gb_i}$ exists. It is clear that $\lambda_{\gb_i}\in \SingS$. Consider an exact sequence
\[
0\to \UUS \xrightarrow{i} \SingS \xrightarrow{\psi} {}_2\classS\to 0,
\]
where $i$ is the canonical inclusion. The sequence splits since all three groups are $\FF_2$-vector spaces. This shows that $\SB\cup \{\lambda_{\gb_1}, \dotsc, \lambda_{\gb_m}\}$ is a basis of~$\SingS$.
\end{poc}

\section{Elements of independent signs}\label{sec:signs}
When dealing with formally real number fields, we often need to construct elements of independent signs in distinct real embeddings of the number field. Here we shortly explain how to construct them. The method presented in Algorithm~\ref{alg:ordering_separation} below is not new, however, the authors are not aware of any easily accessible reference. Hence, for the reader's convenience, we provide an explicit pseudo-code.

\begin{alg}\label{alg:ordering_separation}
Given a number field $\K = \QQ(\theta)$ with $r$ real embeddings, denoted hereafter $\sigma_1, \dotsc, \sigma_r$, and a subset $I\subseteq \{1, \dotsc, r\}$, this algorithm returns an element $\rho\in \un$ such that $\sigma_i(\rho) < 0$ for $i\in I$ and $\sigma_j(\rho)> 0$ for $j\notin I$.
\begin{enumerate}
\item Let $f$ be the defining polynomial for~$\K$ and $\xi_1 := \sigma_1(\theta), \dotsc, \xi_r := \sigma_r(\theta) \in \RR$ be all the real roots of~$f$. 
\item Find intervals $(a_i, b_i)$ for $i\leq r$, with rational endpoints, that isolate roots of~$f$ i.e. $\xi_i\in (a_i, b_i)$ for every~$i$.
\item Set $\eta_i := (\theta - a_i)(\theta - b_i)$ for $i\leq r$.
\item Output $\rho := \prod\limits_{i\in I}\eta_i$.
\end{enumerate}
\end{alg}

The algorithm is simple enough that we take the liberty to omit a rigorous proof of its correctness. Let us only mention that in a practical implementation, the elements $\eta_1, \dotsc, \eta_r$ are, of course, constructed only once and cached between successive executions of this algorithm. Unfortunately this algorithm is not fully sufficient for applications we have in mind. In particular we need $\rho$ to be a local square at some fixed primes. This goal is achieved in Algorithm~\ref{alg:strong_ordering_separation} below, but first we need to introduce the following auxiliary procedure.

\begin{alg}\label{alg:positive_approximation}
Given a finite set $\setS = \{\gp_1,\dotsc, \gp_n\}$ of non-archimedean places, corresponding exponents $k_1, \dotsc, k_n$, and elements $\lambda_1, \dotsc, \lambda_n\in \un$, this algorithm constructs a \textbf{totally positive} element $\alpha\in \un$ such that
\[
\alpha\equiv \lambda_i\pmod{\gp_i^{k_i}}
\]
for every $i\leq n$.
\begin{enumerate}
\item Using Chinese Remainder Theorem construct $\beta\in \un$ such that 
\[
\beta\equiv \lambda_i\pmod{\gp_i^{k_i}}
\]
for every $i\leq n$.
\item Let $\Sprime$ be the set of all prime numbers dominated by elements in~$\setS$.
\item For every $p\in \Sprime$ set
\[
m(p) := \max\{ k_i\st \gp_i\text{ dominates }p,\ \gp_i\in\setS\}.
\]
\item Set
\[
s := \prod_{p\in \Sprime} p^{m(p)}.
\]
\item Find a positive integer~$t$ such that 
\[
t\cdot s > \max\{ \sigma_j(-\beta)\st j\leq r\}, 
\]
where $\sigma_1, \dotsc, \sigma_r : \K\into \RR$ are all the real embeddings of~$\K$.
\item Output $\alpha := \beta + t\cdot s$.
\end{enumerate}
\end{alg}

\begin{poc}
First, we prove that $\alpha$ is totally positive. Fix any real embedding $\sigma_j: \K\into \RR$. We have: 
\[
\sigma_j(\alpha)
= \sigma_j( \beta + t\cdot s)
= \sigma_j(\beta) + t\cdot s
> \sigma_j(\beta) + \sigma_j(-\beta)
= 0.
\]
Thus, $\alpha$ is indeed totally positive. Next, observe that for every $i\leq n$ we have $\ord_{\gp_i} s\geq k_i$, hence 
\[
\alpha \equiv \beta \equiv \lambda_i\pmod{ \gp_i^{k_i}}
\]
and this ends the proof.
\end{poc}

\begin{alg}\label{alg:strong_ordering_separation}
Let $\K = \QQ(\theta)$ be a formally real number field with $r$ real embeddings, denoted $\sigma_1, \dotsc, \sigma_r$, hereafter. Given a subset $I\subseteq \{1,\dotsc, r\}$ and a finite set~$\setS$ of non-archimedean places, this algorithm returns an element $\rho\in \un$ such that 
\begin{enumerate}\renewcommand{\theenumi}{$\roman{enumi}$}
\item\label{it:negative_in_I} $\sigma_i(\rho) < 0$ for $i\in I$,
\item\label{it:positive_out_I} $\sigma_i(\rho) > 0$ for $i\notin I$,
\end{enumerate}
and $\rho$ is a local square at every $\gp\in S$.
\begin{enumerate}
\item Construct an element $\alpha_1\in \un$ such that
\[
\sgn \sigma_i(\alpha_1) =
\begin{cases}
-1 &\text{if }i\in I\\
\phantom{-}1 &\text{if }i\notin I,
\end{cases}
\]
for every $i\leq r$.
\item Use Algorithm~\ref{alg:positive_approximation} to construct a totally positive element $\alpha_2\in \un$, that is congruent to~$\alpha_1$ modulo $\gp^{1 + \ord_\gp4}$ for every $\gp\in \setS$.
\item Output $\rho = \alpha_1\cdot \alpha_2$.
\end{enumerate}
\end{alg}

\begin{poc}
Since $\alpha_2$ is totally positive, it is clear that $\sgn \sigma_i(\rho) = \sgn\sigma_i(\alpha_1)$ satisfies conditions~\eqref{it:negative_in_I} and~\eqref{it:positive_out_I}. Moreover, for every prime $\gp\in\setS$, we have
\[
\rho = \alpha_1\cdot \alpha_2 \equiv \alpha_1^2\pmod{\gp^{1 + \ord_\gp 4}}.
\]
Thus, $\rho$ is a local square by the well known consequence of the Local Square Theorem (see e.g. \cite[Corollary~VI.2.20]{Lam2005}).
\end{poc}

\section{Computing the anisotropic part}\label{sec:anisotropic_part}
In this section we present our main algorithm that constructs an an\-iso\-tropic part of a given quadratic form. Except for forms of an anisotropic dimension one, that are trivial to handle (see Observation~\ref{obs:adim_1}), the general idea is to construct the anisotropic part incrementally. In each step we will drop the anisotropic dimension by one, till we obtain a form that has a binary anisotropic part. The different anisotropic dimensions are discussed in a separate subsections.

\subsection{Anisotropic dimension four and above}
\begin{alg}\label{alg:dim_4}
Given a quadratic form $\q = \form{a_1, \dotsc, a_n}$ of anisotropic dimension~$d\geq 4$, this algorithm constructs an element $\alpha\in \un$ such that $\adim\bigl(\q\perp \form{-\alpha}\bigr) = d - 1$.
\begin{enumerate}
\item\label{st:nonreal} If $\K$ is non-real, then output $1$ and quit. 
\item Let $\sigma_1, \dotsc, \sigma_r: \K\into \RR$ be all the real embeddings of~$\K$.
\item Set
\[
I_+ := \bigl\{ i\leq r\st \sgn \sigma_i(\q) = d \bigr\},\qquad
I_- := \bigl\{ i\leq r\st \sgn \sigma_i(\q) = -d \bigr\}.
\]
\item Construct on element $\alpha\in \un$ such that $\sigma_i(\alpha)> 0$ for all $i\in I_+$ and $\sigma_i(\alpha) < 0$ for all $i\in I_-$.
\item Output $\alpha$.
\end{enumerate}
\end{alg}

\begin{poc}
Let us begin with the case of non-real fields. It is well known that over a non-real global field every form of dimension $\geq 5$ is isotropic (see e.g., \cite[Corollary~VI.3.5]{Lam2005}). Hence, if this is the case, then $d$ must be $4$ and the form $\q\perp\form{-1}$ is isotropic. Since the parity of the dimension and anisotropic dimension coincide, we have $\adim\bigl(\q\perp\form{-1}\bigr) = d-1$. This proves the correctness of step~\eqref{st:nonreal}.

In what follows we assume that $\K$ is formally real. Over $\CC$ every form of dimension greater than~$1$ is always isotropic. Consequently, the local anisotropic dimension of $\q\perp \form{-\alpha}$ at any complex place cannot exceed~$1$, hence is trivially strictly smaller than~$d$. In turn, fix a real embedding $\sigma_i$ of~$\K$. If $i\in I_+\cup I_-$, then by the definition of~$\alpha$ we have $\bigl|\sgn \sigma_i \bigl(\q\perp \form{-\alpha}\bigr)\bigr| = d - 1$. Conversely suppose that $i \notin I_+\cup I_-$. Then $|\sgn \sigma_i( \q)| \leq d - 2$ and consequently $\bigl|\sgn \sigma_i\bigl( \q \perp \form{-\alpha} \bigr)\bigr| \leq d - 1$. Finally, take a completion $\Kp$ of~$\K$ at some finite prime~$\gp$. Every form of dimension $\geq 5$ over $\Kp$ is isotropic, hence $\adim \bigl((\q\perp \form{-\alpha})\otimes \Kp\bigr) \leq 4\leq d$ and if $d = 4$ the parity preservation implies $\adim \bigl((\q\perp \form{-\alpha})\otimes \Kp\bigr) \leq 3$. All in all, it follows from the local-global principle (see e.g., \cite[Setion~VI.3]{Lam2005}) that the anisotropic dimension of $\q\perp \form{-\alpha}$ is $d - 1$, as claimed. 
\end{poc}

\subsection{Anisotropic dimension three}
We will now deal with forms of an\-iso\-tro\-pic dimension three. As in the previous section, the idea is to find an element $\alpha\in \un$ such that by adding $\form{-\alpha}$ to~$\q$ we will further drop the anisotropic dimension. For clarity of exposition we will deal with real and non-real cases separately. We begin with non-real fields, where the situation is considerably simpler.

\begin{alg}\label{alg:adim3_nonreal}
Let~$\K$ be a non-real number field and~$\q$ be a quadratic form over~$\K$ of the anisotropic dimension~$3$. This algorithm construct an element $\alpha\in \un$ such that $\adim\bigl( \q\perp \form{-\alpha}\bigr) = 2$. 
\begin{enumerate}
\item Set $\setS := \GP(\q) \cup \{\gd_1, \dotsc, \gd_l\}$, where $\gd_1, \dotsc, \gd_l$ are all the dyadic primes of~$\K$.
\item Using Chinese Remainder Theorem find $\alpha\in \un$ such that
\[
\alpha \equiv
\left\{\begin{array}{lll}
\disc(\q) - 1 &\pmod{\gp}   &\text{if }\ord_\gp\disc(\q)\notin 2\ZZ\\
\pi_\gp       &\pmod{\gp^2} &\text{if }\ord_\gp\disc(\q)\in 2\ZZ
\end{array}\right.
\]
for every prime $\gp\in S$.
\item Output~$\alpha$.
\end{enumerate}
\end{alg}

\begin{poc}
Let $\q_a = \form{a, b, c}$ be the sought anisotropic part of~$\q$. We have
\[
\q \cong \form{a, b, c}\perp w\times \form{1,-1},
\]
where $w = w(\q)$ is the Witt index of~$\q$. Let $\alpha\in \un$ be the element constructed by the algorithm. We claim that $\q_a\perp \form{-\alpha}$ is isotropic. Take a prime $\gp \notin S\cup \GP(\q_a)$. Then $a$, $b$ and~$c$ have even valuations at~$\gp$, hence $\q_a\otimes \Kp$, being a ternary form, is isotropic (by \cite[Corollary~VI.2.5]{Lam2005}). Therefore $\adim( \q_a\otimes \Kp )$ equals~$1$, since it must have the same parity as the dimension of~$\q_a$. It follows that
\[
\adim\bigl( (\q_a\perp\form{-\alpha})\otimes \Kp\bigr) \in \{0, 2\}
\]
and so $\q_a\perp\form{-\alpha}$ is locally isotropic at~$\gp$.

Next, take a prime $\gp\in \GP(\q_a)\setminus \setS$. In particular $\gp$ is non-dyadic. We have $\ord_\gp\disc(\q) \even$, hence precisely two of the coefficients $a$, $b$ and $c$ must have an odd valuation at~$\gp$. Without loss of generality we may assume that these are $a$ and~$b$. The second residue homomorphism $W\Kp\to W\K(\gp)$ vanishes at $\q\otimes\Kp$ so it must vanish on $\q_a\otimes \Kp$, as well. It follows that $\form{a, b}\otimes\Kp$ is hyperbolic and consequently $\adim(\q_a\otimes \Kp) = 1$. This way we show that $\adim\bigl((\q_a\perp \form{-\alpha})\otimes \Kp\bigr) \leq 2$.

Finally, pick a prime $\gp\in \setS$. Be it dyadic or non-dyadic. It is well known (see e.g. \cite[Theorem~VI.2.10 and Corollary~VI.2.15]{Lam2005}) that $\form{1, -u, -\pi_\gp, u\pi_\gp}$ is a unique (up to isometry) anisotropic form over~$\K_\gp$. The determinant of this form is~$1$. In particular, in the square class group $\sqgd[\K_\gp]$, every coefficient of this form is a product of the other three. Suppose a contrario that $\q_a\perp \form{-\alpha}$ is anisotropic. Therefore
\[
\q_a\perp \form{-\alpha} = \form{a, b, c, -\alpha} \cong \form{1, -u, -\pi_\gp, u\pi_\gp},
\]
and so we have $\alpha \equiv -abc = \disc(\q)\pmod{\squares{\K_\gp}}$. Hence, $\alpha = x^2\cdot \disc(\q)$ for some $x\in \units{\Kp}$. Consider two cases. If $\ord_\gp \disc(\q)$ is odd, then $\alpha \equiv \disc(\q) - 1\pmod{\gp}$ and so it has even valuation. This is impossible since at the same time $\alpha = x^2\cdot \disc(\q)$ has odd valuation.

Conversely, suppose that $\ord_\gp \disc(\q)$ is even. Then $\alpha\equiv \pi_\gp\pmod{\gp^2}$ has odd valuation which contradicts the fact that $\alpha = \disc(\q)\cdot x^2$ must have even valuation. The contradiction follows from the supposition that $\form{a, b, c, -\alpha}\otimes \Kp$ can be anisotropic. 

We have shown that $\q_a\perp \form{-\alpha}$ is locally isotropic at every finite place of~$\K$. Since $\K$ is non-real, every infinite place of~$\K$ is complex and $\q_a\perp \form{-\alpha}$ is trivially locally isotropic at complex places. The local-global principle asserts that $\q_a\perp \form{-\alpha}$ is isotropic over~$\K$ and this implies that $\adim\bigl(\q\perp\form{-\alpha}\bigr) < 3$.
\end{poc}

We may now turn our attention to formally real fields. 

\begin{alg}\label{alg:adim3_real}
Given a quadratic form $\q = \form{a_1, \dotsc, a_n}$ of the anisotropic dimension $\adim(\q) = 3$ over a formally real number field~$\K$, this algorithm constructs an element $\alpha\in \un$ such that $\adim\bigl( \q\perp \form{-\alpha} \bigr) = 2$.
\begin{enumerate}
\item Set $\setS := \GP(\q) \cup \{\gd_1, \dotsc, \gd_l\}$, where $\gd_1, \dotsc, \gd_l$ are all the dyadic primes of~$\K$.
\item Let $\sigma_1, \dotsc, \sigma_r :\K\into \RR$ be all the real embeddings of~$\K$. Call Algorithm~\ref{alg:strong_ordering_separation} to find an element $\alpha_1\in \un$ such that
\begin{equation}\tag{\ensuremath{\spadesuit}}\label{eq:dim_3_signs}
\sgn \sigma_i(\alpha_1) =
\begin{cases}
+1, &\text{if }\sgn\sigma_i(\q) > 0,\\
-1, &\text{if }\sgn\sigma_i(\q) < 0
\end{cases}
\end{equation}
and $\alpha_1$ is a local square at every prime $\gp\in S$.
\item Using Algorithm~\ref{alg:positive_approximation} construct a totally positive element~$\alpha_2$ such that 
\[
\alpha_2 \equiv 
\left\{
\begin{array}{lll}
\disc(\q) - 1 &\pmod{\gp}  & \text{if } \ord_\gp\disc(\q)\odd\\
\pi_\gp      &\pmod{\gp^2} & \text{if } \ord_\gp\disc(\q)\even
\end{array}
\right.
\]
for every $\gp\in S$.
\item Output $\alpha := \alpha_1\cdot \alpha_2$.
\end{enumerate}
\end{alg}

\begin{poc}
We follow similar lines as in the proof of correctness of Algorithm~\ref{alg:adim3_nonreal}. We shall show that $\q_a \perp \form{-\alpha}$ is isotropic­, where $\q_a = \form{a, b, c}$ is the sought anisotropic part of~$\q$.

It is trivially isotropic at complex places. Now, take a real embedding~$\sigma_i$ of~$\K$. We know that $\alpha_2$ is totally positive. Thus, we infer from~\eqref{eq:dim_3_signs} that
\[
\sgn \sigma_i\bigl( \q_a\perp \form{-\alpha} \bigr)
= \sgn \sigma_i\bigl( \q\perp \form{-\alpha_1} \bigr)
\in \{ 0, \pm 2 \}
\]
and so $\sigma_i\bigl( \q_a\perp \form{-\alpha}\bigr)$ is indeed isotropic.

Now, it is time to turn our attention to non-archimedean places. For a prime~$\gp$ not in~$\setS$ the same arguments as used for Algorithm~\ref{alg:adim3_nonreal} show that $\adim\bigl(((\q_a\perp \form{-\alpha}) \otimes \Kp\bigr) \leq 2$. For primes~$\gp$ sitting in $\setS$ the arguments used in the abovementioned proof show that
\[
\adim\bigl((\q_a\perp \form{-\alpha_2})\otimes \Kp\bigr) \leq 2.
\]
But for these primes we have $\alpha_1\in \squares{\Kp}$, hence $\form{-\alpha} \cong \form{-\alpha_2}$. This means that $\q_a\perp \form{-\alpha}$ is locally isotropic at every place of~$\K$. Consequently it is isotropic over~$\K$ by the local-global principle. It follows that
\[
\adim\bigl( \q\perp\form{-\alpha}\bigr) \leq 2
\]
and this proves the correctness of the algorithm. 
\end{poc}

\subsection{Anisotropic dimensions one and two}
Once we managed to reduce the anisotropic dimension to two, it is time to explicitly construct an anisotropic part of a given form. This task is achieved by the following algorithm.

\begin{alg}\label{alg:adim2}
Given a quadratic form $\q = \form{a_1, \dotsc, a_n}$ of an anisotropic dimension~$2$, with coefficients in a number field~$\K$, this algorithm constructs an anisotropic part $\q_a$ of~$\q$. 
\begin{enumerate}
\item\label{st:even_Witt_inx} If the Witt index~$w(\q)$ of~$\q$ is not divisible by~$4$, then replace~$\q$ by $\q\perp w'\times \form{-1,1}$, where $w'+w(\q)\equiv 0\pmod{4}$.
\item Compute the discriminant $d := \disc\q$. 
\item\label{st:dyadic} Set $\setS := \GP(\q) \cup \{\gd_1, \dotsc, \gd_l\}$, where $\gd_1, \dotsc, \gd_l$ are all the dyadic primes of~$\K$.
\item\label{st:embeddings} Let $\sigma_1, \dotsc, \sigma_r : \K\into \RR$ be the real embeddings of~$\K$ such that $\sigma_i(d)$ is negative.
\item Repeat the following steps:
  \begin{enumerate}
  \item\label{st:basis} Construct a basis $\{\beta_1, \dotsc, \beta_m\}$ of the group $\SingS$ of $\setS$-singular elements modulo squares.
  \item For every real embedding $\sigma_i$ with $i\leq r$, set 
  \[
  v_i := 
  \begin{cases}
  1 & \text{if }\sgn \sigma_i(q) = -2,\\
  0 & \text{if }\sgn \sigma_i(q) = 2.
  \end{cases}
  \]
  \item Let $\setS = \{ \gp_1, \dotsc, \gp_s\}$. For $i\leq s$, set 
  \[
  w_i := 
  \begin{cases}
  1 & \text{if }s_{\gp_i}\q = -1,\\
  0 & \text{if }s_{\gp_i}\q = 1.
  \end{cases}
  \]
  Here $s_{\gp_i}\q$ is the Hasse invariant of $\q\otimes \K_{\gp_i}$.
  \item Construct a matrix $A = (a_{ij})$ with $r$ rows \textup(indexed by the real embeddings $\sigma_1, \dotsc, \sigma_r$\textup) and $m$ columns \textup(indexed by the elements of the basis of~$\SingS$ computed in step~\eqref{st:basis}\textup), setting 
  \[
  a_{ij} := 
  \begin{cases}
  1 & \text{if }\sigma_i(\beta_j) < 0,\\
  0 & \text{if }\sigma_i(\beta_j) > 0.
  \end{cases}
  \]
  \item Construct a matrix $B = (b_{ij})$ with $s$ rows \textup(indexed by the primes $\gp_1, \dotsc, \gp_s$ in~$\setS$\textup) and $m$ columns, setting 
  \[
  b_{ij} := 
  \begin{cases}
  1 & \text{if }(\beta_j, d)_{\gp_i} = -1,\\
  0 & \text{if }(\beta_j, d)_{\gp_i} = 1.
  \end{cases}
  \]
  Here $(\beta_j, d)_{\gp_i}$ is the $\gp_i$-adic  Hilbert symbol. 
  \item\label{st:adim_2:alpha} If the following system of $\FF_2$-linear equations
  \begin{equation}\tag{\ensuremath{\clubsuit}}\label{eq:dima_2}
  \left(\begin{array}{c}
  A \\\hline B
  \end{array}\right)\cdot
  \begin{pmatrix} \varepsilon_1\\\vdots\\\varepsilon_m\end{pmatrix} =
  \left(\begin{array}{c} v_1\\\vdots\\ v_r\\\hline w_1\\\vdots\\ w_s\end{array}\right)
  \end{equation}
  has a solution, then set 
  \[
  \alpha := \prod_{i=1}^m\beta_i^{\varepsilon_i}
  \]
  and exit the loop.
  \item Otherwise find a new non-archimedean place $\gq\notin \setS$, append it to~$\setS$, and reiterate the loop. 
  \end{enumerate}
\item Output $q_a := \form{\alpha, -\alpha\cdot d}$.
\end{enumerate}
\end{alg}


The proof of correctness of the algorithm needs to be preceded by  two lemmas.

\begin{lem}\label{lem:big_set}
Let $\q$ be a quadratic form over a number field~$\K$ and $\gd_1, \dotsc, \gd_l$ be all the dyadic places of~$\K$. If $\adim(\q) = 2$, then there is a non-archimedean place~$\gq$ such that among all \textup(necessarily isometric\textup) anisotropic parts of~$\q$, there is at least one, denoted~$\q_a$ hereafter, satisfying the condition 
\[
\GP(\q_a) \subseteq \GP(\q)\cup \{\gd_1, \dotsc, \gd_l\}\cup \{\gq\}.
\]
\end{lem}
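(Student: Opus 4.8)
The plan is to start from any anisotropic part $\q_a = \form{a,b}$ of $\q$ and modify it (within its isometry class, i.e. replacing $\form{a,b}$ by an isometric diagonalization) so that its coefficients have bad reduction only at a controlled set of primes. Since $\adim(\q) = 2$, the binary form $\q_a$ has nontrivial discriminant $d = \disc(\q_a) = \disc(\q) = -ab$ up to squares, and every anisotropic part is similar to $\form{1,-d}$ after scaling; more precisely, $\q_a \cong \form{a, -ad}$ for any value $a$ represented by $\q_a$. So the real freedom is in the choice of the represented value $a$. I would phrase the whole problem as: find $a \in \un$ represented by $\q_a$ such that $\ord_\gp a$ is even for all $\gp \notin \GP(\q)\cup\{\gd_1,\dots,\gd_l\}\cup\{\gq\}$, and similarly for $-ad$; the latter is automatic once the former holds away from the primes dividing $d$, because $\ord_\gp(-ad) = \ord_\gp a + \ord_\gp d$ and $\ord_\gp d$ is even for $\gp \notin \GP(\q)\cup\{\text{dyadics}\}$ (as $d = \disc\q$ and the non-dyadic primes outside $\GP(\q)$ see all coefficients of $\q$ with even valuation).

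The key step is then a splitting/approximation argument. Fix the finite set $T := \GP(\q)\cup\{\gd_1,\dots,\gd_l\}$. The form $\q_a$ is isotropic over $\Kp$ for every $\gp \notin T$ except possibly at the finitely many primes where it is genuinely anisotropic locally; but over a number field an anisotropic binary form is anisotropic locally at infinitely many primes, so I cannot hope to kill all of them — that is exactly why a single extra prime $\gq$ is allowed. What I can do: pick a value $a_0 \in \un$ represented by $\q_a$ (e.g. $a_0 = a$). For each $\gp \in \GP(\q_a)\setminus T$, the prime $\gp$ is non-dyadic and $\q_a \otimes \Kp$ must be anisotropic (otherwise the second residue map argument, as in the proof of correctness of Algorithm~\ref{alg:adim3_nonreal}, forces even valuations), so $\q_a\otimes\Kp \cong \pi_\gp\cdot\form{1,-u_\gp}$ up to isometry, meaning $\GP(\q_a)\setminus T$ is precisely the set of non-dyadic $\gp \notin T$ at which $\q_a$ is locally anisotropic of this "ramified" shape. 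Now I want to replace $a_0$ by another represented value $a_1$ that is a unit at every prime in this (possibly infinite) set except one. Use weak approximation together with the fact that the values represented by $\q_a$ at a given place form an open subgroup of $\units{\Kp}$ (index $\le 2$, containing the squares): choose $a_1$ to be a local square at finitely many chosen primes of $\GP(\q_a)\setminus T$ — this forces $\ord_\gp a_1$ even there — but one must be careful that making $a_1$ a local square might be incompatible with $a_1$ being represented, since at a ramified prime the squares are represented but have even valuation, which is fine.

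The honest mechanism, and I expect it to be the crux, is this: the class $[\prod_{\gp \in \GP(\q_a)\setminus T}\gp^{\text{(odd exponent)}}]$ of the "bad part" of $a_1$ in the $T$-class group may be nontrivial, and a single prime $\gq$ is needed to absorb it — i.e. I choose $a_1$ represented by $\q_a$ such that the fractional ideal $(a_1)$, outside $T$, equals $\gq \cdot (\text{square ideal})$ for some prime $\gq \notin T$. Concretely: the set of ideals $\mathfrak{b}$ coprime to $T$ such that $\mathfrak{b}$ (or $\mathfrak{b}$ times a suitable square) arises as the prime-to-$T$ part of a value represented by $\q_a$ includes, by Hasse–Minkowski / the fact that $\q_a$ represents an element of any prescribed square class locally outside a bad set, a full coset structure; Chebotarev/weak approximation lets me pick this $\mathfrak{b}$ to be prime. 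Then $a_1$ has even valuation everywhere outside $T \cup \{\gq\}$, hence $\GP(\form{a_1, -a_1 d}) \subseteq T \cup \{\gq\}$, and $\form{a_1, -a_1 d}$ is an anisotropic part of $\q$ because it is $\cong \form{a,b}$ (same discriminant, represents the common value... actually one must check it is isometric to $\q_a$, which follows since binary forms of equal discriminant sharing a represented value are isometric). The main obstacle is precisely making the "bad-ideal class" representable by a single prime while keeping $a_1$ a value of $\q_a$; I would handle it by combining the $\setS$-class group surjectivity (as in Algorithm~\ref{alg:S-singular}) with a local-global existence statement for represented values, choosing the new prime $\gq$ to lie in a prescribed ideal class and split appropriately so that the local conditions at $T$ remain satisfiable.
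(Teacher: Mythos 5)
Your high-level strategy is in fact the same as the paper's: keep the discriminant $d$ and trade the representative $\alpha$ of the binary anisotropic part $\form{\alpha,-\alpha d}$ for a new represented value whose odd-valuation support outside $T=\GP(\q)\cup\{\gd_1,\dotsc,\gd_l\}$ is a single new prime $\gq$ absorbing the ideal class of the ``bad part.'' The problem is that the one step you yourself call ``the main obstacle'' is precisely the content of the lemma, and your resolution of it is only a gesture (``$S$-class group surjectivity $+$ a local-global existence statement for represented values $+$ choose $\gq$ in a prescribed ideal class, split appropriately''). Two things are missing there. First, the existence of an element $\gamma$ that approximates $\alpha$ at the finitely many places of $T$ and at the real places, is a unit outside $T\cup\{\gq\}$, and has $\ord_\gq\gamma=1$ for some new prime $\gq$, is a genuine theorem (the paper imports it from Leep--Wadsworth, Lemma~2.1; it ultimately rests on the existence of primes in ray classes), not something that follows from weak approximation alone. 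Second, and more seriously, after producing such a $\gamma$ you must still prove that it \emph{is} represented by $\q_a$, i.e.\ that $\form{\gamma,-\gamma d}\cong\form{\alpha,-\alpha d}$; in terms of Hilbert symbols this means $(\gamma,d)_\gp=(\alpha,d)_\gp$ at \emph{every} place, including the new prime $\gq$ at which no local condition was imposed. The paper closes this at $\gq$ by Hilbert reciprocity (the symbols agree everywhere else, so they must agree at $\gq$). In your formulation this difficulty reappears as the compatibility of ``$\gq$ lies in a prescribed ideal class'' with ``$\gq$ splits in $K(\sqrt{d})$'' and with the congruences at $T$; these conditions are not independent in general (e.g.\ when $K(\sqrt{d})$ is contained in the relevant ray class field), and resolving the interaction is exactly the reciprocity argument your sketch omits. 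So as written the proof has a hole at its center.

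Two side claims in your outline are also wrong, and they suggest the local analysis is not yet in place. A binary anisotropic form over a number field is locally anisotropic at only \emph{finitely} many places, namely those $\gp$ with $(\alpha,d)_\gp=-1$, since a Hilbert symbol is nontrivial at finitely many places; your ``infinitely many primes'' motivation for $\gq$ is not the real reason (the real reason is the possible nontriviality of the ideal class of the odd part, which you do state correctly later). Moreover, at a prime $\gp\in\GP(\q_a)\setminus T$ the form $\q_a\otimes\Kp$ is \emph{hyperbolic}, not anisotropic of the shape $\pi_\gp\form{1,-u_\gp}$: since $\gp\notin\GP(\q)$ is non-dyadic, the second residue of $\q\otimes\Kp$ vanishes, hence so does that of $\q_a\otimes\Kp$, which for odd $\ord_\gp\alpha$ and even $\ord_\gp d$ forces $d\in\squares{\Kp}$ and makes the form split. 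This last fact is actually what makes the whole scheme work (it gives $(\alpha,d)_\gp=1$ for all $\gp\notin T$), so getting it backwards is worth fixing before reworking the main argument.
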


\begin{proof}
As in step~\eqref{st:dyadic} of the algorithm, denote $\setS := \GP(\q)\cup \{\gd_1, \dotsc, \gd_l\}$. By assumption, $\adim(\q) = 2$, hence there is $\alpha\in \un$ such that
\begin{equation}\label{eq:adim_2_alpha}
q\cong \form{\alpha, -\alpha d}\perp w\times \form{1,-1},
\end{equation}
where $d:= \disc(\q)$ is the discriminant and $w := w(\q)$ is the Witt index of~$\q$. We will show that $\alpha$ can be selected to be $\bigl(S\cup \{\gq\}\bigr)$-singular for some place~$\gq$.

Fix any~$\alpha$ that satisfies condition~\eqref{eq:adim_2_alpha}. It follows from \cite[Lemma~2.1]{LW1992} that there is a place $\gq\notin S$ and an element $\gamma\in \un$ such that:
\begin{enumerate}\renewcommand{\theenumi}{$C_{\arabic{enumi}}$}
\item\label{it:reals} $\sgn\sigma(\gamma) = \sgn\sigma(\alpha)$ for every real embedding $\sigma$ of~$\K$;
\item\label{it:S_nondyadic} $\gamma\equiv \alpha\pmod{\gp}$ for every non-dyadic prime $\gp\in S$;
\item\label{it_S_dyadic} $\gamma\equiv \alpha\pmod{\gd_i^{1 + \ord_{\gd_i}4}}$ for every dyadic prime $\gd_i$, $i \leq l$;
\item $\ord_\gq\gamma = 1$;
\item $\ord_\gr\gamma = 0$ for every prime $\gr\notin S\cup \{\gq\}$.
\end{enumerate}
It is clear that $\gamma$ is $\bigl(S \cup \{\gq\}\bigr)$-singular. We claim that the following isometry holds:
\[
\form{\gamma, -\gamma d} \cong \form{\alpha, -\alpha d}.
\]
It holds locally at every archimedean place---indeed, for complex places this is trivial and for the real ones it follows from~\eqref{it:reals}. Consider now a prime $\gp\in S$, either dyadic or non-dyadic. By (\ref{it:S_nondyadic}/\ref{it_S_dyadic}) and the Local Square Theorem, we obtain $\gamma\cdot \squares{\Kp} = \alpha\cdot \squares{\Kp}$ and so $\form{\gamma, -\gamma d}\otimes \Kp\cong \form{\alpha, -\alpha d}\otimes \Kp$. Conversely, take a prime~$\gp$ not in~$\setS$ but distinct from~$\gq$. Then
\[
\ord_\gp d \equiv \ord_\gp\gamma = 0 \pmod{2}. 
\]
We need to consider two cases. If $\ord_\gp\alpha$ is also even, then 
\[
\form{\alpha, -\alpha d}\otimes \Kp\cong 
\form{1, -d}\otimes \Kp\cong 
\form{\gamma, -\gamma d}\otimes \Kp.
\]
On the other hand, if $\ord_\gp\alpha$ is odd, we consider the second residue homomorphism $W\Kp\to W\K(\gp)$. The forms~$\q$ and~$\q_a$ are similar, hence they map to the same class in $W\K(\gp)$, but $\gp\notin \GP(\q)$, thus $\q$ is mapped to the null element of $W\K(\gp)$. Consequently $\form{\alpha, -\alpha d}\otimes \Kp$ is hyperbolic and so is $\form{\gamma, -\gamma d}\otimes \Kp$. Therefore, the two forms are again isometric. Finally, we consider the localization of~$\K$ at the singled out place~$\gq$. From the previous part we obtain that the Hilbert symbols $(\alpha, d)_\gp$ and $(\gamma, d)_\gp$ coincide for  every prime $\gp\ne \gq$. The Hilbert reciprocity law implies that $(\alpha, d)_\gq = (\gamma, d)_\gq$, as well. This way we have proved that $\form{\alpha, -\alpha d}$ and $\form{\gamma, -\gamma d}$ are locally isometric at every place of~$\K$, hence they are isometric over~$\K$ by the local-global principle. This proves the claim. It follows that, the form $\form{\gamma, -\gamma d}$ is the anisotropic part of~$\q$ that we are looking for.
\end{proof}

\begin{lem}\label{lem:sgn_vs_disc}
Let $\K$ be a real closed field, $a, b\in \un$ and let $w$ be a positive integer. The form $\q = \form{a,b}\perp w\times \form{1, -1}$ is hyperbolic if and only if its discriminant is positive.
\end{lem}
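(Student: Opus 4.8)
The plan is to reduce everything to Sylvester's law of inertia together with the elementary description of square classes of a real closed field. Recall that over a real closed field $\K$ there is a unique ordering, every positive element is a square, so $\sqgd$ is represented by $1$ and $-1$, and every non-degenerate form is classified up to isometry by its dimension and its signature; in particular a form is hyperbolic if and only if it is even-dimensional with signature zero. I write $\sgn$ for the sign in the unique ordering of~$\K$.

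First I would compute the signature of~$\q$. Since the part $w\times\form{1,-1}$ contributes $0$, we get $\sgn(\q)=\sgn(a)+\sgn(b)$. As $\q$ has even dimension, it is hyperbolic precisely when $\sgn(\q)=0$, i.e. when $\sgn(a)=-\sgn(b)$, equivalently $ab<0$.

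Next I would compute $\disc(\q)$ straight from the definition in Section~\ref{sec:notation}. The form~$\q$ has dimension $n=2w+2$, and its coefficient list is $a,b$ followed by $w$ copies of the pair $1,-1$; hence $\prod_{i=1}^n a_i=ab\cdot(-1)^w$, while $\tfrac12 n(n-1)=(w+1)(2w+1)$ has the same parity as $w+1$. Therefore
\[
\disc(\q)=(-1)^{w+1}\cdot ab\cdot(-1)^w=-ab
\]
in $\sqgd$. Since an element of~$\K$ is positive if and only if it is a square, $\disc(\q)>0$ holds exactly when $-ab>0$, that is, when $ab<0$.

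Combining the two computations gives: $\q$ is hyperbolic $\iff ab<0 \iff \disc(\q)>0$, which is the assertion. The only point demanding a little care is the bookkeeping of the exponents of $-1$ in the discriminant formula; the rest is a direct appeal to Sylvester's theorem and to the fact that in a real closed field positivity coincides with being a square.
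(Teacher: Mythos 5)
Your proof is correct and follows essentially the same route as the paper's: both arguments boil down to the identity $\disc(\q)=-ab$ together with the fact that over a real closed field positivity equals being a square, so hyperbolicity is equivalent to $a$ and $b$ having opposite signs. Your only deviation is cosmetic---you phrase the hyperbolicity criterion via Sylvester's signature instead of the paper's direct observation that a hyperbolic form has square discriminant and, conversely, that coefficients of opposite sign make $\form{a,b}$ hyperbolic---and you additionally verify $\disc(\q)=-ab$ from the definition, which the paper merely asserts.
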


\begin{proof} 
The discriminant of~$\q$ is $\disc(\q) = -ab$. If the form is hyperbolic, then $\disc(\q)$ is a square, hence it is positive. Conversely if $\disc(\q) > 0$, then $a$ and $b$ have opposite signs. Thus, $\q$ is hyperbolic.
\end{proof}

%

We are now in a position to prove correctness of the presented algorithm.

\begin{poc}[Algorithm~\ref{alg:adim2}]
Lemma~\ref{lem:big_set} asserts that there exists an anisotropic part~$\q_a$ of~$\q$ whose coefficients are $\bigl(\GP(\q)\cup \{\gd_1, \dotsc, \gd_l\}\cup \{\gq\}\bigr)$-singular for some prime~$\gq$ of~$\K$. This implies that the algorithm terminates. All we need to prove is that it outputs a correct result. Let
\[
\alpha := \beta_1^{\varepsilon_1}\dotsm \beta_m^{\varepsilon_m}
\]
be the element constructed in step~\eqref{st:adim_2:alpha}. We shall show that $\q$ and $\q_a = \form{\alpha, -\alpha\cdot d}$ are locally similar at every place of~$\K$. This is trivial for complex places. Consider a real embedding $\sigma :\K\into \RR$. First assume that $\sigma(d) > 0$ so $\sigma$ is not one of the embeddings we consider in step~\eqref{st:embeddings}. Then, obviously $\sigma(\q_a)$ is hyperbolic. Lemma~\ref{lem:sgn_vs_disc} says that $\sigma(\q)$ is hyperbolic, as well. Thus, the two forms are similar, as claimed.

Conversely assume that $\sigma = \sigma_i$ is one of the embeddings in step~\eqref{st:embeddings}, hence $\sigma_i(d) < 0$. We have
\begin{align*}
\sgn\sigma_i(\alpha)
&= \prod_{j = 1}^m \sgn\sigma_i(\beta_j^{\varepsilon_j}\bigr)\\
&= (-1)^{a_{i1}\varepsilon_1}\dotsm (-1)^{a_{im}\varepsilon_m}\\
&= (-1)^{v_i}\\
&= \frac12\sgn \sigma_i(\q),
\end{align*}
since $\varepsilon_1, \dotsc, \varepsilon_m$ form a solution to the system~\eqref{eq:dima_2}.

We now turn our attention to finite places of~$\K$. First, fix a prime $\gp\notin \setS$. Then $\alpha$ as well as all the coefficients $a_1, \dotsc, a_n$ of~$\q$ (consequently $d$, too) have even valuations at~$\gp$. It follows from \cite[Corollary~V1.2.5]{Lam2005}, and the very definition of the Hasse invariant, that both the Hasse invariants $s_\gp\q$ and $s_\gp\q_a$ vanish. We constructed~$\q_a$ in such a manner that the discriminants of~$\q$ and~$\q_a$ coincide. It follows from \cite[Theorem~ V.3.21]{Lam2005} that $\q\otimes\Kp$ and $\q_a\otimes \Kp$ are similar. 

Finally, fix a non-archimedean place $\gp_i \in \setS$ with $i\leq s$. It can be either dyadic or non-dyadic. We have
\begin{align*}
s_{\gp_i}\q_a
&= s_{\gp_i}\form{\alpha, - \alpha\cdot d} \\
&= (\alpha, - \alpha\cdot d)_{\gp_i} \\
&= (\alpha, d)_{\gp_i}  \\
&= \bigl( \beta_1^{\varepsilon_1}\dotsm \beta_m^{\varepsilon_m}, d\bigr)_{\gp_i} \\
&= \prod_{\substack{j\leq m\\ (\beta_j, d)_{\gp_i} = -1}} (-1)^{\varepsilon_j}\\
&= (-1)^{b_{i1}\varepsilon_1}\dotsm (-1)^{b_{im}\varepsilon_m} \\
&= (-1)^{w_i}
= s_\gp\q.
\end{align*}
Thus, the same argument as in the previous part shows that $\q\otimes\Kp$ and $\q_a\otimes\Kp$ are similar. Notice that the fact that the Witt index of~$\q$ is divisible by~$4$, ensures that the Hasse invariants coincide also for dyadic primes.

All in all, $\q$ and $\q_a$ are locally similar everywhere. Consequently they are similar over~$\K$ by the local-global principle. The fact that the anisotropic dimension of~$\q$ is~$2$ implies that $\q_a$ is the sought anisotropic part of~$\q$.
\end{poc}

For the sake of completeness we should also discuss forms of the anisotropic dimension equal one. This case, however, is completely trivial.

\begin{obs}\label{obs:adim_1}
If the anisotropic dimension of~$\q$ is~$1$, then $\form{\disc(\q)}$ is an anisotropic part of~$\q$.
\end{obs}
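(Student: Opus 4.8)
The plan is to unwind the Witt decomposition and compute the discriminant by hand. Since $\adim(\q) = 1$, the Witt decomposition theorem provides an element $a \in \un$ and an integer $w \geq 0$ with
\[
\q \cong \form{a} \perp w\times\form{1,-1},
\]
where $\form{a}$ is \emph{the} anisotropic part of~$\q$ (unique up to isometry), and it is genuinely anisotropic because $\q$ is non-degenerate, forcing $a \neq 0$. As the anisotropic part is unique up to isometry, it suffices to verify that $\form{\disc(\q)} \cong \form{a}$, that is, that $\disc(\q)$ and $a$ determine the same square class of~$\K$.

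Next I would simply evaluate the discriminant on the explicit diagonalization $\q \cong \form{a, 1, -1, \dots, 1, -1}$ with $n = 2w + 1$ entries. The sign prefactor is $(-1)^{\frac12 n(n-1)} = (-1)^{w(2w+1)}$ and the product of the diagonal entries equals $a\cdot(-1)^{w}$, so
\[
\disc(\q) = (-1)^{w(2w+1)}\cdot a\cdot(-1)^{w} = (-1)^{2w(w+1)}\cdot a = a,
\]
since $2w(w+1)$ is even. Hence $\form{\disc(\q)}$ is literally equal to $\form{a}$, which is an anisotropic part of~$\q$, and the observation follows.

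There is essentially no obstacle here: the only point requiring a modicum of care is that the hyperbolic summand $w\times\form{1,-1}$ contributes nothing to the square class of the discriminant, i.e. that the exponent $w(2w+1)+w$ is even — and the short computation above makes this transparent. (Alternatively one could invoke that the discriminant is a well-defined invariant of the Witt class within a fixed dimension together with $\disc\form{a}=a$, but spelling out the arithmetic is quicker and entirely self-contained.)
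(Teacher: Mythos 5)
Your proof is correct: the paper states this observation without proof, regarding it as trivial, and your computation is exactly the routine verification it has in mind --- writing $\q \cong \form{a}\perp w\times\form{1,-1}$ and checking that the sign factor $(-1)^{w(2w+1)}$ cancels the $(-1)^w$ from the hyperbolic part, so that $\disc(\q)$ and $a$ lie in the same square class. The only wording to tighten is ``literally equal'': since the discriminant is invariant under isometry only modulo $\squares{\K}$, the conclusion is $\form{\disc(\q)}\cong\form{a}$ rather than equality of elements, which is all the observation needs.
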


\section{Example}\label{sec:example}
Below we present a simple example illustrating how the algorithms described in this paper work. Take $\K = \QQ(\sqrt{-7})$ and the quadratic form
\begin{multline*}
\q := \Bigl\langle
  -3 - 9\sqrt{-7}, 
  -1, 
  -2 - 6\sqrt{-7}, 
  1 - \sqrt{-7}, \\
  -6 + 4\sqrt{-7}, 
  -3 + 2\sqrt{-7}, 
  4 - 4\sqrt{-7}
  \Bigr\rangle
\end{multline*}
The anisotropic dimension of~$\q$ equals~$3$ and the discriminant is $\disc\q = - 61056 - 342912\sqrt{-7}$. There are precisely four primes that matter for~$\q$, including the two dyadic primes of~$\K$. These are:
\begin{align*}
\gd_1 &= \bigl(2, \tfrac12(1 + \sqrt{-7}) \bigr), &
\gd_2 &= \bigl(2, \tfrac12(7 + \sqrt{-7}) \bigr)\\
\gp_1 &= (3), &
\gp_2 &= \bigl(37, \tfrac12(7 + \sqrt{-7})\bigr)
\end{align*}
Using Algorithm~\ref{alg:adim3_nonreal}, we find that $\adim\bigl( \q\perp\form{-1406} \bigr) = 2$, since
\[
1406\equiv \disc\q - 1\pmod{\gp_1}
\qquad\text{and}\qquad
1406\equiv \pi_\gp\pmod{\gp^2}
\]
for $\gp\in \{\gd_1, \gd_2,\gp_2\}$. 

Subsequently, we apply Algorithm~\ref{alg:adim2} to the form $\q' := \q\perp\form{-1406}$. There are two new primes that originate from the inclusion of $-1406$. These primes are
\[
\gp_3 = (19), \qquad
\gp_4 = \bigl(37, 20 + \sqrt{-7} \bigr).
\]
Moreover, for system~\eqref{eq:dima_2} to become solvable, we append three more primes to the set~$\setS$. The additional primes are:
\[
\gp_5 = (5),\qquad 
\gp_6 = \bigl( \sqrt{-7} \bigr),\qquad
\gp_7 = \bigl(11, 2 + \sqrt{-7}\bigr).
\]
The system~\eqref{eq:dima_2} becomes:
\[
\begin{pmatrix}
0 & 0 & 1 & 0 & 1 & 1 & 1 & 1 & 1 & 1\\
0 & 1 & 1 & 0 & 0 & 1 & 1 & 0 & 0 & 0\\
0 & 0 & 1 & 1 & 1 & 0 & 1 & 1 & 1 & 1\\
0 & 1 & 1 & 0 & 1 & 1 & 0 & 1 & 1 & 1\\
0 & 1 & 0 & 0 & 0 & 0 & 1 & 1 & 0 & 0\\
0 & 1 & 0 & 1 & 1 & 1 & 0 & 0 & 1 & 1\\
0 & 0 & 0 & 0 & 0 & 0 & 0 & 0 & 0 & 0\\
0 & 0 & 0 & 0 & 0 & 0 & 0 & 0 & 0 & 0\\
0 & 0 & 0 & 0 & 0 & 0 & 0 & 0 & 0 & 0\\
\end{pmatrix}
\cdot 
\begin{pmatrix}
\varepsilon_1\\
\varepsilon_2\\
\varepsilon_3\\
\varepsilon_4\\
\varepsilon_5\\
\varepsilon_6\\
\varepsilon_7\\
\varepsilon_8\\
\varepsilon_9\\
\varepsilon_{10}\\
\end{pmatrix}
=
\begin{pmatrix}
0\\ 0\\ 0\\ 1\\ 0\\ 1\\ 0\\ 0\\ 0
\end{pmatrix}
\]
Then, $V = (0, 1, 1, 0, 0, 0, 0, 1, 0, 0)$ is a solution. This solution corresponds to $\alpha = \frac12\bigl(-27 - 19\sqrt{-7}\bigr)$. Therefore, $\q'_a = \form{\alpha, -\alpha\cdot \disc\q'}$ is an anisotropic part of the form~$\q'$. Consequently,
\begin{align*}
\q_a 
&= \form{1406}\perp \q'_a\\ 
&= \Bigl\langle 1406, \tfrac12(-27 - 19\sqrt{-7}), 30903025152 - 7324337664\sqrt{-7} \Bigr\rangle
\end{align*}
is the sought anisotropic part of~$\q$.

\section{Conclusion}
In this paper we present an explicit method for constructing an anisotropic part of a given quadratic form over a number field. The algorithm described in this paper have been implemented in CQF package \cite{Koprowski2020} for the computer algebra system Magma \cite{BCP1997}. This let us verify how the algorithms behave in practice. In order to test the efficiency of our solution we prepared two test-suits, both consisting of $20$ randomly generated quadratic forms. In the first test-suit we used $20$ forms, each of dimension~$20$, over a non-real field of degree~$20$ \cite[Number field 20.0.569468379011812486801.1]{lmfdb}. In the other test-suit we used $20$ forms of dimension~$10$ over a formally real field \cite[Number field 10.10.80803005003125.1]{lmfdb}, which has $10$ distinct orderings. Both test-suits were executed on a budget PC (Intel i5-9400F, $2.9$ GHz with 32GB of RAM). The computation times for the first test varied from~$276$ to~$365$ second, with the mean value of~$324$ seconds. The figures for the second test-suits were: $0.24$s, $3.42$s and $2.82$s, respectively. This shows that the presented method works in practice.

\subsubsection*{Acknowledgments.} We wish to thank Alfred Czogała, who showed us Algorithm~\ref{alg:S-singular} and allowed us include it in this paper.

\bibliographystyle{plain} 
\bibliography{anisotropic}
\end{document}